\renewcommand{\phi}{\varphi}
\newcommand{\q}{\mathbb{Q}}
\newcommand{\f}{\mathbb{F}}
\renewcommand{\bar}{\overline}
\newcommand{\gt}{\mathcal{GT}}
\newcommand{\gtz}{\mathcal{GT}_{\!1}}
\newcommand{\Gal}{\operatorname{Gal}}
\newcommand{\gal}{\Gal(\bar\q / \q)}
\newcommand{\psl}{\operatorname{PSL}}
\newcommand{\ssl}{\operatorname{SL}}
\newcommand{\tr}{\operatorname{Tr}}
\newcommand{\T}{\mathscr{T}}
\newcommand{\E}{\mathbf{E}}
\newcommand{\PE}{P{\!}E}
\newtheoremstyle{pedro}{}{}{\itshape}{}{\sc}{~--}{ }{\thmname{#1}\thmnumber{ #2}\thmnote{ (#3)}}
\newtheoremstyle{pedrodef}{}{}{}{}{\sc}{~--}{ }{\thmname{#1}\thmnumber{ #2}\thmnote{ (#3)}}
\theoremstyle{pedro}
\newtheorem{lem}{Lemma}[section]
\newtheorem{thm}[lem]{Theorem}
\newtheorem{prop}[lem]{Proposition}
\newtheorem{coro}[lem]{Corollary}
\theoremstyle{remark}
\newtheorem{rmk}[lem]{Remark}
\theoremstyle{pedrodef}
\newtheorem{ex}[lem]{Example}
\title{The Grothendieck-Teichmüller group of~$\psl(2,q)$}
\author{Pierre Guillot}
\address{
Universit\'{e} de Strasbourg \& CNRS\\
Institut de Recherche Math\'{e}matique Avanc\'{e}e\\
7~Rue Ren\'{e} Descartes\\
67084 Strasbourg, France}
\email{guillot@math.unistra.fr}
\let\oldtocsection=\tocsection
\let\oldtocsubsection=\tocsubsection
\let\oldtocsubsubsection=\tocsubsubsection
\renewcommand{\tocsection}[2]{\hspace{0em}\oldtocsection{#1}{#2}}
\renewcommand{\tocsubsection}[2]{\hspace{2em}\oldtocsubsection{#1}{#2}}
\renewcommand{\tocsubsubsection}[2]{\hspace{2em}\oldtocsubsubsection{#1}{#2}}
\numberwithin{equation}{section}
\begin{document}

\maketitle

\begin{abstract}
We show that the Grothendieck-Teichmüller group of~$\psl(2, q)$, or more precisely the group~$\gtz(\psl(2, q))$ as previously defined by the author, is the product of an elementary abelian~$2$-group and several copies of the dihedral group of order~$8$. Moreover, when~$q$ is even, we show that it is trivial.

We explain how it follows that the moduli field of any ``dessin d'enfant'' whose monodromy group is~$\psl(2, q)$ has derived length~$\le 3$.

This paper can serve as an introduction to the general results on the Grothendieck-Teichmüller group of finite groups obtained by the author.
\end{abstract}

\section{Introduction \& Statement of results}

In~\cite{pedro}, we have introduced the {\em Grothendieck-Teichmüller group} of a finite group~$G$, denoted~$\gt(G)$. Motivation for the study of this group stems from the theory of {\em dessins d'enfants}. Recall that a dessin is essentially a bipartite graph embedded on a compact, oriented surface (without boundary), and that the absolute Galois group~$\gal$ acts on (isomorphism classes of) dessins. As explained in {\em loc.\ cit.}, there is an action of~$\gt(G)$ on those dessins whose {\em monodromy group} is~$G$, and the Galois action on the same objects factors {\em via} a map~$\gal \longrightarrow \gt(G)$.

Motivation for the study of {\em all} groups~$\gt(G)$, for all groups~$G$, is increased by the fact that the combined map 
\[ \gal \longrightarrow \gt := \lim_G \, \gt(G)  \]
is injective.

The group~$\gt(G)$ possesses a normal subgroup~$\gtz(G)$, which is such that the quotient~$\gt(G) / \gtz(G)$ is abelian. It follows that the commutator subgroup of~$\gal$ maps into~$\gtz(G)$, and injects into the inverse limit~$\gtz$ formed by these as~$G$ varies. There is little mystery left in~$\gt(G) / \gtz(G)$ (see~\cite{pedro} again), and the challenge is in the computation of~$\gtz(G)$.

In this paper we treat the case of~$G= \psl(2, q)$. We obtain the following result.

\begin{thm} \label{thm-main-intro}
The group~$\gtz(\psl(2, 2^s))$ is trivial for all~$s \ge 1$.

The group~$\gtz(\psl(2, q))$, when~$q$ is odd, is isomorphic to a product 
\[ C_2^{n_1} \times D_8^{n_2} \, .   \]
\end{thm}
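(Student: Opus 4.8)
We shall run the general machinery of~\cite{pedro} against the detailed structure theory of $\psl(2,q)$. The first step is to translate $\gtz(G)$, for $G = \psl(2,q)$, into the combinatorial datum that governs it. By construction an element of $\gtz(G)$ assigns to each dessin with monodromy group $G$ --- equivalently, to each generating pair $(x,y)$ of $G$ taken up to conjugacy --- an element $f(x,y) \in [G,G]$, and these assignments are subject to the Grothendieck--Teichm\"uller relations in the normalization $\lambda = 1$: the inversion relation $f(y,x) = f(x,y)^{-1}$, the cocycle-type relation $f(z,x)f(y,z)f(x,y) = 1$ along triples with $xyz = 1$ (the relations carried by the four-punctured sphere), and the pentagon relation on generating quadruples (the relation carried by the five-punctured sphere), all read modulo the equivalence identifying assignments that induce the same action on dessins. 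The plan is then to apply the local-to-global description furnished by the author's general results: $\gtz(G)$ embeds into a product $\prod_\iota A_\iota$ of elementary pieces $A_\iota$ indexed by the orbits --- under $\operatorname{Aut}(G)$ together with the power operations $x \mapsto x^k$ --- of the possible branch data, i.e.\ essentially by orbits of conjugacy classes of $G$.

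The second step feeds in what is classically known about $\psl(2,q)$: Dickson's classification of its subgroups; the list of conjugacy classes (the identity, the unipotent class or classes, and the elements of the split and non-split maximal tori); the structure of the normalizers of the cyclic subgroups these elements generate; the precise rule for which powers $x^k$ are conjugate to $x$ and which to $x^{-1}$; and the classical determination of the generating pairs of $\psl(2,q)$ --- the admissible triples of orders --- which fixes both the index set of the product and the $\operatorname{Aut}(G)$-action on it. Since the abelian quotient $\gt(G)/\gtz(G)$ already accounts for the multiplicative, $(\z/n)^\times$-valued part of the action, each $A_\iota$ retains only the \emph{discrete} symmetries of the branch data; one then computes that an orbit with no such extra symmetry contributes the trivial group, an orbit carrying a single reflection-type symmetry contributes $C_2$, and an orbit in which two conjugacy classes (a class and its inverse, or the two unipotent classes when $q \equiv 3 \pmod 4$) are interchanged while each still carries a reflection contributes a copy of $C_2 \wr C_2 \cong D_8$. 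Assembling the pieces, and checking that the pentagon relation forces no further identification between distinct pieces, yields $\gtz(\psl(2,q)) \cong C_2^{n_1} \times D_8^{n_2}$ for $q$ odd, with $n_1$ and $n_2$ extracted from the count of self-paired versus genuinely paired classes as a function of $q$ and of $q$ modulo small integers.

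For $q = 2^s$ the very same analysis applies, but now every piece $A_\iota$ collapses to the trivial group. The point is that each source of a $C_2$ in the odd case is a symptom of the two-to-one cover $\ssl(2,q) \to \psl(2,q)$, hence of the element $-I$: the splitting of the unipotent locus into two classes, the failure of $u \sim u^{-1}$, and the sign ambiguities in lifting elements through this cover. In characteristic $2$ one has $-I = I$, so $\psl(2,2^s) = \ssl(2,2^s)$, there is a single unipotent class, every element is real, and all of these obstructions vanish; hence $\gtz(\psl(2,2^s)) = 1$ for every $s \ge 1$.

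The main obstacle, as is typical in this circle of ideas, is the pentagon relation: one must verify that the automorphisms written down piece by piece actually glue into honest elements of $\gtz(\psl(2,q))$, and conversely that nothing has been overlooked, and this is exactly where the explicit description of the generating pairs of $\psl(2,q)$ and of how the relations on the four- and five-punctured spheres interact with the subgroup lattice of $\psl(2,q)$ must be used in earnest. Once that is in place, pinning down the exact values of $n_1$ and $n_2$ is a lengthy but routine case distinction on the residue of $q$.
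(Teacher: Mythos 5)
Your proposal is a programme rather than a proof: at every point where the mathematical content of the theorem actually lives, you write ``one then computes'' or ``a lengthy but routine case distinction'', and the one concrete ingredient that makes the computation possible never appears. The paper's argument rests entirely on MacBeath's classification of triples $(x,y,z)$ in $\ssl(2,q)$ with $xyz=1$ according to the trace triple $(\tr x,\tr y,\tr z)$: for a non-singular trace triple the set of such triples is a \emph{single} conjugacy class when $q$ is even, and exactly \emph{two} classes, interchanged by the diagonal outer automorphism $\alpha$, when $q$ is odd. Since elements of $\gtz(G)$ are compatible with coordinate-wise conjugacy they preserve traces; in characteristic $2$ this already forces $\phi(t)=t$ for every $t$, and in odd characteristic it confines each direct factor of $\gtz(G)$ to a permutation group on a set of at most $4$ elements commuting with $\alpha$ (and with the rest of $Out(G)\times S_3$), whence the $C_2$, $C_2^2$, $D_8$ trichotomy once cyclic groups of order $4$ are ruled out. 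Your characteristic-$2$ paragraph replaces all of this with a heuristic about $-I$ and ``obstructions vanishing'', which is not an argument; nothing in it explains why a permutation preserving traces and commuting with $H$ must be the identity.

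A second problem is that the framework you set up does not match the object being computed. In this paper $\gtz(G)$ is, by definition, the group of permutations of conjugacy classes of generating triples commuting with $Out(G)\times S_3$ and compatible with the relation $\equiv$; there is no cocycle $f(x,y)\in[G,G]$, no pentagon relation, and no five-punctured sphere in sight, so the ``main obstacle'' you identify is not an obstacle in this setting. The local-to-global embedding into a product $\prod_\iota A_\iota$ indexed by branch data is precisely the statement you would need to prove: the paper establishes the analogous splitting over the $H$-orbits of trace triples, and this requires the disjointness lemmas for the sets $\PE(\pm a,\pm b,\pm c)$, which distinguish the cases $abc=0$ and $abc\ne 0$. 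Finally, you assert that $n_1$ and $n_2$ can be extracted as explicit functions of $q$ by a case distinction on residues; the paper makes no such claim and only tabulates machine-computed values, so this would itself require substantial justification.
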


Here~$D_8$ is the dihedral group of order~$8$. Note that this result was observed experimentally for small values of~$q$ in~\cite{pedro}. 




This theorem depends crucially on the work of MacBeath in~\cite{mac}, which classifies the triples~$(x, y, z)$ in~$\psl(2, q)$ in various ways. Indeed, we feel that the group~$\gtz(\psl(2, q))$ encapsulates part of this information neatly.  

Let us give an application to dessins d'enfants. The first part of the next theorem was implicit in~\cite{pedro}, and indeed it hardly deserves a proof once the statement is properly explained. However, it seems worth spelling it out for emphasis. 

\begin{thm} \label{thm-moduli}
Let~$G$ be a finite group. There exists a number field~$K$, Galois over~$\q$, such that~$\Gal(K/\q)$ is a subgroup of~$\gt(G)$, and containing the moduli field of any dessin whose monodromy group is~$G$.

For example, suppose that~$X$ is a dessin whose monodromy group is~$\psl(2, q)$. If~$q$ is even, then the moduli field of~$X$ is an abelian extension of~$\q$. If~$q$ is odd, then the Galois closure~$\tilde F$ of the moduli field~$F$ of~$X$ is such that~$\Gal(\tilde F/\q)$ has derived length~$\le 3$.
\end{thm}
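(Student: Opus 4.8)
The plan is to deduce this theorem from the action of $\gt(G)$ on dessins described in \cite{pedro} together with the structural result of Theorem~\ref{thm-main-intro}. First I would recall the setup: for a dessin $X$ with monodromy group $G$, the Galois action on the isomorphism class of $X$ factors through the map $\gal \longrightarrow \gt(G)$, so the moduli field $F$ of $X$ — the fixed field of the stabilizer of $[X]$ in $\gal$ — is contained in the fixed field of the preimage of $\stab_{\gt(G)}([X])$. Since $\gt(G)$ is profinite and acts on a finite set of dessins (up to isomorphism, with monodromy $G$), the intersection $N$ of all such stabilizers is an open normal subgroup of $\gt(G)$ of finite index; I would take $K$ to be the fixed field of the preimage of $N$ in $\bar\q$, which is Galois over $\q$ with $\Gal(K/\q) \hookrightarrow \gt(G)/N \hookrightarrow \gt(G)$, and by construction $K$ contains the moduli field of every dessin with monodromy group $G$. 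This establishes the first, general assertion.

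For the case $G = \psl(2, q)$, I would invoke the exact sequence $1 \to \gtz(G) \to \gt(G) \to \gt(G)/\gtz(G) \to 1$ with abelian quotient, recalled in the excerpt. When $q$ is even, Theorem~\ref{thm-main-intro} gives $\gtz(\psl(2, 2^s)) = 1$, so $\gt(G)$ is itself abelian; hence $\Gal(K/\q)$ is abelian, and the moduli field $F$ of $X$, lying inside the abelian extension $K/\q$, is itself abelian over $\q$ — in particular $F$ equals its own Galois closure. When $q$ is odd, Theorem~\ref{thm-main-intro} gives $\gtz(G) \cong C_2^{n_1} \times D_8^{n_2}$, which is metabelian (solvable of derived length $\le 2$, since $D_8$ has derived length $2$ and the derived length of a finite product is the maximum of the factors). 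Thus $\gt(G)$ sits in an extension of an abelian group by a metabelian group, so $\gt(G)$ — and therefore any subquotient such as $\Gal(K/\q)$ — has derived length $\le 3$. Since the Galois closure $\tilde F$ of $F$ is contained in $K$, we get $\Gal(\tilde F/\q) \le \Gal(K/\q)$ and hence derived length $\le 3$.

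The only genuinely delicate point is matching conventions so that the "moduli field" appearing in the statement is literally the fixed field of the stabilizer of $[X]$ under the $\gal$-action that factors through $\gt(G)$; this is precisely the content of the theory developed in \cite{pedro}, and given that reference there is nothing new to prove here — as the excerpt itself notes, the first part "hardly deserves a proof once the statement is properly explained." I would therefore keep this verification brief, citing \cite{pedro} for the compatibility of the moduli field with the $\gt(G)$-action, and spend no effort re-deriving it. The arithmetic of derived lengths ($D_8' \cong C_2$, $D_8'' = 1$; an extension of a group of derived length $a$ by one of derived length $b$ has derived length $\le a+b$) is entirely routine and I would state it in one line.
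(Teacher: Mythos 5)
Your argument follows the paper's proof essentially verbatim: factor the Galois action on dessins with monodromy group $G$ through $\lambda_G\colon \gal \to \gt(G)$, take $K$ to be the fixed field of the relevant kernel, and bound derived lengths using that $\gtz(G)$ is metabelian (by Theorem~\ref{thm-main-intro}) while $\gt(G)/\gtz(G)$ is abelian. The one blemish is your step $\gt(G)/N \hookrightarrow \gt(G)$, which is unjustified for a general quotient; the paper sidesteps this by taking $K$ to be the fixed field of $\ker(\lambda_G)$ itself, so that $\Gal(K/\q) \cong \operatorname{im}(\lambda_G) \le \gt(G)$ directly (and in any case $N=1$ here, since $\gt(G)$ is by definition a permutation group on the set classifying these dessins).
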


A word of explanation. First, when~$\Gamma$ is a group we write~$\Gamma'$ for the derived (commutator) subgroup, and we say that~$\Gamma$ has derived length~$\le 3$ when~$\Gamma'''$ is trivial. Also, the {\em moduli field} of a dessin is the extension~$F$ of~$\q$ such that~$\Gal(\bar \q / F)$ is the stabilizer of the isomorphism class of~$X$ under the Galois action. Note that if we can write down explicit equations for~$X$ with coefficients in the number field~$L$, then certainly the moduli field~$F$ is a subfield of~$L$. While there are subtle counterexamples of dessins for which there are no equations over~$F$, it is still intuitively helpful to think of~$F$ as the smallest field over which the dessin is defined.

For example in~\cite{pedro-survey}, Example 4.6 and Example 4.13, we have examined a certain dessin~$X$ (a planar tree), whose monodromy group is the simple group of order~$168$, that is~$\psl(2, 7)$ (or $\psl(3, 2)$, as it is written in {\em loc.\ cit.}). We found explicit equations with coefficients in a field of the form~$\q(\alpha )$ with the minimal polynomial of~$\alpha $ having degree 4 (though not all details are provided); if~$L$ is the Galois closure of~$\q(\alpha )$, then~$\Gal(L/\q)$ is a subgroup of~$S_4$, which has derived length~$3$, confirming the prediction. However, there is even an easier way to see that the moduli field is very simple: there are only two dessins in the Galois orbit of~$X$, so the moduli field is in fact a quadratic extension of~$\q$.

It is an open problem to {\em explicitly} exhibit a dessin such that~$\Gal(\tilde F/\q)$ is non-abelian.

\bigskip

The examples treated in this paper are a less technical illustration of the ideas discussed in~\cite{pedro}, and may serve as an introduction to the latter. Note that, motivation and background aside, it is not necessary to be familiar with~\cite{pedro} in order to follow the arguments we present, leading to the computation of~$\gt_1(\psl(2, q))$.

\section{Definitions}

We take a definition of~$\gtz(G)$ which is only suitable when~$G$ is non-abelian and simple, such as~$G= \psl(2, q)$ ; see~\cite{pedro} for the more general definition. 

So let~$G$ be such a finite group, and let~$\T$ denote the set of triples~$(x, y, z) \in G^3$ such that~$xyz= 1$ and $\langle x, y, z \rangle = G$. Further, we let~$\T/G$ denote the set of orbits in~$\T$ under simultaneous conjugation by an element of~$G$. We write~$[x, y, z]$ for the class of~$(x, y, z)$. (In~\cite{pedro} we write~$\mathscr{P}$ instead of~$\T$, thinking of these elements as pairs~$(x, y)$.)

There is a free action of~$Out(G)$, the group of outer automorphisms of~$G$, on~$\T/G$. Moreover, there is also an action of~$S_3$, the symmetric group of degree~$3$. This is essentially a permutation of the coordinates, but to be more precise, one usually introduces the permutation~$\theta $ of~$\T/G$ defined by~$\theta \cdot [x, y, z]  = [ y, x, z^{x}]$, and the permutation~$\delta $ defined by~$\delta \cdot [x, y, z] = [z, y, x^{y}]$. These are both well-defined, and square to the identity operation of~$\T/G$. There is a homomorphism~$S_3 \to S(\T/G)$, where~$S(\T/G)$ is the symmetric group of the set~$\T/G$, mapping~$(12)$ to~$\theta $ and~$(13)$ to~$\delta $.

The two actions described commute, and together define an action of~$H :=Out(G) \times S_3$ on~$\T/G$. 

Let us write~$[x, y, z] \equiv [x', y', z']$ when~$x$ is a conjugate of~$x'$, while~$y$ is a conjugate of~$y'$, and~$z$ is a conjugate of~$z'$. This is an equivalence relation on~$\T/G$.

The group~$\gtz(G)$ is defined, in this context, to be the subgroup of the symmetric group~$S(\T/G)$ comprised by those permutations~$\phi$ which:

\begin{itemize}
\item commute with the action of~$H$; in other words, if~$h \in H$, $t \in \T/G$ then~$\phi(h\cdot t) = h \cdot \phi(t)$.
\item are compatible with~$\equiv$; that is, $t \equiv t'$ implies~$\phi(t) \equiv \phi(t')$, if~$t, t' \in \T/G$.
\end{itemize}

\noindent (Somewhat arbitrarily, we write~$h \cdot t$ for the action of~$h \in H$, and~$\phi(t)$ for the action of~$\phi \in \gtz(G)$, in order to set the elements of~$\gtz(G)$ apart.)

\section{Characteristic two}

We start by assuming that~$q$ is a power of~$2$, so that~$\psl(2, q) = \ssl(2,q)$.

Following MacBeath~\cite{mac}, we partition the set of triples~$(x, y, z)$ of elements of~$\ssl(2, q)$ satisfying~$xyz=1$ into the subsets~$\E(a, b, c)$, where~$a, b, c \in \f_q$, by requiring~$(x, y, z) \in \E(a, b, c)$ when~$\tr(x)= a$, $\tr(y)=b$, $\tr(z)=c$ (here~$\tr$ is the trace).

Since elements of~$\gtz(\ssl(2, q))$ are assumed to be compatible with the relation~$\equiv$, the following observation is trivially true.

\begin{lem}
Suppose~$(x, y, z) \in \E(a, b, c)$, with~$\langle x, y, z \rangle = \ssl(2, q)$, let~$\phi \in \gtz(\ssl(2, q))$, and suppose that~$x', y', z'$ satisfy 
\[ \phi([x, y, z]) = [x', y', z'] \, .   \]
Then~$(x', y', z') \in \E(a, b, c)$. \hfill $\square$
\end{lem}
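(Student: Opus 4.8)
The plan is to observe that this lemma is essentially a formal consequence of the definition of~$\gtz$, specifically the requirement that its elements be compatible with the equivalence relation~$\equiv$, combined with the basic fact that in~$\ssl(2, q)$ the trace is a conjugacy invariant. First I would recall that if~$g, g' \in \ssl(2, q)$ are conjugate, then~$\tr(g) = \tr(g')$; this is immediate since trace is invariant under conjugation (indeed under all of~$\ssl(2,q)$-conjugation, and more). Thus the sets~$\E(a,b,c)$ are unions of~$\equiv$-equivalence classes: if~$(x, y, z) \in \E(a, b, c)$ and~$(x', y', z')$ has~$x'$ conjugate to~$x$, $y'$ conjugate to~$y$, $z'$ conjugate to~$z$, then~$\tr(x') = \tr(x) = a$, and similarly for~$b$ and~$c$, so~$(x', y', z') \in \E(a, b, c)$ as well.

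Next I would unwind the definition: by hypothesis~$\phi \in \gtz(\ssl(2, q))$ and~$\phi([x, y, z]) = [x', y', z']$. Since~$\phi$ is compatible with~$\equiv$ and trivially~$[x, y, z] \equiv [x, y, z]$, we get~$[x', y', z'] = \phi([x,y,z]) \equiv \phi([x,y,z]) = [x',y',z']$ — this is vacuous, so the real content is that~$\equiv$ is a genuine constraint only across distinct classes. The correct argument: the condition ``$t \equiv t'$ implies~$\phi(t) \equiv \phi(t')$'' does not directly say~$\phi(t) \equiv t$. So I must be more careful. In fact the statement as phrased in the excerpt ("trivially true") is slightly delicate, and I would point out that what is really being used is a stronger unstated consequence, OR I would reread: actually the lemma should follow from the fact that~$\phi$ preserves each~$\equiv$-class-union. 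Let me reconsider the structure of the argument.

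The cleanest route: the relation~$\equiv$ partitions~$\T/G$ into blocks, and~$\phi$ being compatible with~$\equiv$ in the stated sense means~$\phi$ permutes these blocks (it maps each~$\equiv$-class into a single~$\equiv$-class). But the lemma claims~$\phi$ fixes each block~$\E(a,b,c) \cap \T/G$ setwise — a priori~$\phi$ could send the block of triples with traces~$(a,b,c)$ to the block with traces~$(a', b', c')$. Here I would invoke that~$\phi$ also commutes with the~$Out(G)$-action, and that~$Out(\ssl(2, 2^s))$ acts on traces via the Frobenius... no, that would not pin it down either. So the honest statement is that the lemma as written is using an additional feature: since~$\phi \in S(\T/G)$ and the~$\equiv$-classes are exactly the nonempty intersections~$\E(a, b, c) \cap \T$, and~$\phi$ is a \emph{permutation} compatible with~$\equiv$, it induces a permutation of the set of~$\equiv$-classes; the lemma's claim that this permutation is the identity must come from elsewhere — but actually for the purposes of the quoted lemma, one only needs that~$\phi$ maps the class of~$(x,y,z)$ into \emph{some} single~$\E(a', b', c')$, and the paper likely means to assert only this, or has in mind that trace data is rigidified.

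I expect the main (and only) obstacle is precisely this point of interpretation: disentangling whether ``$\phi(t) \equiv \phi(t')$'' gives the a priori weaker ``$\phi$ permutes~$\equiv$-classes'' or the desired ``$\phi$ fixes each~$\equiv$-class.'' My plan is therefore to prove the genuinely trivial statement — that~$\E(a,b,c)$, intersected with~$\T$, is a union of~$\equiv$-classes, so that membership in a given~$\E(a,b,c)$ depends only on the~$\equiv$-class — and then note that since there is nothing forcing~$\phi$ to move classes, and since the subsequent arguments of the paper are what actually fix the traces, the statement should be read as: \emph{if}~$\phi$ fixes the~$\equiv$-class of~$(x,y,z)$ then~$(x',y',z')$ has the same traces. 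In the write-up I would simply record: trace is a conjugation invariant, hence~$\E(a,b,c) \cap \T$ is~$\equiv$-saturated; compatibility with~$\equiv$ then yields the claim, with the understanding (to be justified in the following sections) that~$\phi$ does not permute the relevant blocks nontrivially. This is a one-line proof modulo that caveat, which matches the author's ``trivially true.''
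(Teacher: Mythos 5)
You have correctly isolated the only possible sticking point: as literally transcribed in Section~2, the condition ``$t \equiv t'$ implies $\phi(t) \equiv \phi(t')$'' only says that $\phi$ permutes the $\equiv$-classes, whereas the Lemma needs $\phi$ to preserve each block of the (coarser) partition by trace vectors. Your observation that $\tr$ is a conjugation invariant, so that each $\E(a,b,c) \cap \T$ is saturated for $\equiv$, is correct and is half of the intended argument. But your proposal then stops short of a proof: you defer the key point --- that $\phi$ does not move the $\equiv$-class of $[x,y,z]$ --- to ``the following sections.'' Nothing in the following sections establishes this; on the contrary, the very next Corollary (triviality of $\gtz(\ssl(2,q))$) is deduced \emph{from} this Lemma, so the deferral would be circular. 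As written, your argument proves only that $\phi$ sends $\E(a,b,c) \cap \T$ into some $\E(a',b',c') \cap \T$, which is strictly weaker than the statement.

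The missing ingredient is definitional, not something to be found downstream. In~\cite{pedro}, $\gtz(G)$ is the part of $\gt(G)$ acting with exponent $\lambda = 1$: its elements send a class $[x,y,z]$ to a class $[x',y',z']$ in which each coordinate is a conjugate of the corresponding original coordinate, i.e.\ $\phi(t) \equiv t$ for every $t$. That this is the reading intended here is confirmed by the paper's own later use of the definition (in the proof that $\T(a,b,c)/G$ is $\gtz(G)$-stable: ``Since $\phi$ is compatible with $\equiv$ by definition, we see that $g'$ is conjugate to $g$''), which invokes exactly $\phi(t) \equiv t$ and not merely the block-permutation property. Once this is granted, the Lemma is indeed trivially true, by the computation you already carried out: $x'$ is conjugate to $x$, hence $\tr(x') = \tr(x) = a$, and likewise for $y'$ and $z'$. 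So your write-up has a genuine gap relative to the statement as claimed, though it is a gap you diagnosed rather than overlooked; the correct fix is to invoke the stronger form of the definition of $\gtz(G)$, not to appeal to later sections.
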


Note that~$\ssl(2, q)$ acts on~$\E(a, b, c)$ by simultaneous conjugation. The crucial point is this:

\begin{prop}[after MacBeath] \label{prop-mac-mod-2}
When the set~$\E(a, b, c)$ contains a triple~$(x, y, z)$ such that~$\langle x, y, z \rangle = \ssl(2, q)$, it consists of just one conjugacy class.
\end{prop}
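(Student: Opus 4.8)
The plan is to reduce the statement to a concrete matrix computation in $\ssl(2,q)$ and then invoke MacBeath's analysis of the map $(x,y) \mapsto (\tr x, \tr y, \tr xy)$. First I would recall that for any pair $(x,y) \in \ssl(2,q)^2$, the triple $(\tr x, \tr y, \tr xy) \in \f_q^3$ determines $(x,y)$ up to conjugacy \emph{provided the pair generates an absolutely irreducible subgroup} — equivalently, provided $x$ and $y$ have no common eigenvector over $\bar\f_q$. This is the classical ``trace coordinates'' statement; MacBeath proves it (in the guise of his parametrization of $\E(a,b,c)$) and it is exactly the input we are allowed to assume. So the real content is: if $\langle x,y,z\rangle = \ssl(2,q)$ with $xyz=1$, then $(x,y)$ acts absolutely irreducibly, hence the whole $\E(a,b,c)$ — which by hypothesis contains this one absolutely irreducible pair — must consist of a single conjugacy class, because a reducible pair with the same trace triple cannot coexist in an $\E(a,b,c)$ that already contains an irreducible one (a reducible pair is conjugate into upper-triangular form, and its trace triple then satisfies a degeneracy relation that the irreducible representative, by MacBeath, also must satisfy — forcing \emph{every} pair in $\E(a,b,c)$ to be conjugate to the irreducible one).

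More carefully, the steps I would carry out are: (1) observe that $\E(a,b,c)$ is nonempty and, by assumption, contains $(x,y,z)$ generating all of $\ssl(2,q)$; since $\ssl(2,q)$ does not stabilize a line in $\f_q^2$, this pair is absolutely irreducible. (2) Quote MacBeath's result that over an algebraically closed field (or directly over $\f_q$, which is what MacBeath does) the conjugacy class of an absolutely irreducible pair is determined by its trace triple, and moreover that when $\E(a,b,c)$ contains an irreducible pair, \emph{every} pair in $\E(a,b,c)$ is conjugate to it — this is the dichotomy in MacBeath's lemmas: either all pairs with a given trace triple are reducible, or they form a single irreducible conjugacy class. (3) Translate from pairs $(x,y)$ back to triples $(x,y,z)$ with $z = (xy)^{-1}$; since $\tr z = \tr (xy)^{-1} = \tr xy$ in $\ssl(2,q)$ (characteristic $2$, so $\tr$ is invariant under inversion as $\tr M^{-1} = \tr M$ when $\det M = 1$), the parameter $c$ is precisely $\tr xy$ and the passage between the two pictures is a bijection respecting conjugacy. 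Assembling these gives that $\E(a,b,c)/\ssl(2,q)$ is a single point.

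The main obstacle, and the place where care is needed, is step (2): one must be precise about \emph{which} of MacBeath's statements is being invoked and check that it applies verbatim over the finite field $\f_q$ rather than only over $\bar\f_q$ — in particular that no issue of the class splitting over $\f_q$ versus $\bar\f_q$ arises. The point is that an absolutely irreducible pair has trivial centralizer in $\psl$, hence $H^1$ obstructions vanish and the geometric conjugacy class has a single $\f_q$-rational orbit; MacBeath handles this directly, so the cleanest route is simply to cite the relevant lemma of \cite{mac} and note that irreducibility is guaranteed here by the generation hypothesis. A secondary, purely bookkeeping point is the translation between MacBeath's ``pair'' convention and the ``triple with $xyz=1$'' convention used here, together with the characteristic-$2$ remark $\tr z = \tr xy$; neither is deep but both should be stated so the reader can match the two sources.
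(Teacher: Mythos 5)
Your proposal is correct and follows essentially the same route as the paper: use the generation hypothesis to rule out the degenerate case in MacBeath's dichotomy, then cite his classification of $\E(a,b,c)$ into conjugacy classes. The only difference is terminological --- you phrase the dichotomy as reducible vs.\ absolutely irreducible pairs, whereas the paper uses MacBeath's ``singular vs.\ non-singular'' trace triples (via his Theorem 2, singular implies affine, hence a proper subgroup) before invoking Theorem 3(ii); these amount to the same thing, and your closing remark that the cleanest path is simply to cite the relevant statement of MacBeath is exactly what the paper does.
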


\begin{proof}
In~\cite{mac}, the triples~$(a, b, c)$ are divided into the ``singular'' ones and the ``non-singular'' ones ; also, the type of~$(x, y, z)$ is the type of~$(\tr(x), \tr(y), \tr(z))$ by definition. Theorem 2 asserts that when~$(x, y, z)$ is singular, the group~$\langle x, y, z \rangle$ is ``affine'', and in particular it is not all of~$\ssl(2, q)$. Our hypothesis guarantees thus that~$(a, b, c)$ is non-singular.

We may then apply (ii) of Theorem 3 in {\em loc.\ cit.}, giving the result.
\end{proof}

\begin{coro}
The group~$\gtz(\ssl(2, q))$ is trivial.
\end{coro}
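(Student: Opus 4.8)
The plan is to show that any $\phi \in \gtz(\ssl(2,q))$ fixes every element of $\T/G$, so that $\gtz(\ssl(2,q))$ is the trivial group. First I would recall that, by the very definition of $\T$, every class $[x,y,z] \in \T/G$ satisfies $\langle x, y, z\rangle = \ssl(2,q)$; in particular, writing $a = \tr(x)$, $b = \tr(y)$, $c = \tr(z)$, the triple $(x,y,z)$ lies in $\E(a,b,c)$ and witnesses the hypothesis of Proposition~\ref{prop-mac-mod-2}.

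Next, given $\phi \in \gtz(\ssl(2,q))$ and a class $[x,y,z]$, write $\phi([x,y,z]) = [x',y',z']$. By the Lemma, $(x',y',z') \in \E(a,b,c)$ as well. But by Proposition~\ref{prop-mac-mod-2}, since $\E(a,b,c)$ contains the triple $(x,y,z)$ generating $\ssl(2,q)$, the whole set $\E(a,b,c)$ is a single conjugacy class under simultaneous conjugation by $\ssl(2,q)$. Hence $(x',y',z')$ is simultaneously conjugate to $(x,y,z)$, which means precisely that $[x',y',z'] = [x,y,z]$ in $\T/G$. Therefore $\phi$ is the identity permutation, and since $\phi$ was arbitrary, $\gtz(\ssl(2,q)) = 1$.

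There is essentially no obstacle here: the corollary is a direct consequence of Proposition~\ref{prop-mac-mod-2} together with the Lemma, and the only thing to be careful about is the bookkeeping — checking that ``lies in the same $\ssl(2,q)$-conjugacy class of triples'' is exactly the relation collapsed when passing from $\T$ to $\T/G$, so that a single conjugacy class in $\E(a,b,c) \cap \T$ corresponds to a single point of $\T/G$. (One should also note in passing that $\E(a,b,c) \cap \T$ could in principle be empty for some $(a,b,c)$, but this causes no trouble: we only ever apply the proposition to triples $(a,b,c)$ that actually arise from a class in $\T/G$.) The real content of the statement is entirely imported from MacBeath's classification via Proposition~\ref{prop-mac-mod-2}.
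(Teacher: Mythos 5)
Your argument is correct and is essentially the paper's own proof: both apply the Lemma to see that $\phi([x,y,z])$ stays in $\E(a,b,c)$, then invoke Proposition~\ref{prop-mac-mod-2} to conclude that all of $\E(a,b,c)$ is a single conjugacy class, hence $\phi(t)=t$. The bookkeeping remarks you add (conjugacy classes of triples versus points of $\T/G$, possibly empty $\E(a,b,c)$) are accurate but not substantively different from what the paper does implicitly.
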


\begin{proof}
Let~$\phi \in \gtz(\ssl(2, q))$. Any~$t \in \T/G$ is of the form~$t= [x, y, z]$ with~$(x, y, z) \in \E(a, b, c)$ for some~$a, b, c$, and~$\langle x, y, z \rangle = \ssl(2, q)$ by definition. The Lemma applies, showing that~$\phi(t) = [x', y', z']$ with~$(x', y', z') \in \E(a, b, c)$, while the Proposition proves that all triples in~$\E(a, b, c)$ are in fact conjugate. As a result~$\phi(t) = t$.
\end{proof}

\section{Odd characteristics}

Now we assume that~$q = p^s$ is a power of the odd prime~$p$, and we turn to the description of~$\gtz(G)$ where~$G= \psl(2, q)$. 

\subsection{Sets of triples}

As in the previous section, we define~$\E(a, b, c)$ to be the set of triples~$(x, y, z) \in \ssl(2, q)^3$ such that~$xyz= 1$ and with~$\tr(x)= a$, 
$\tr(y)=b$, $\tr(z)=c$. We also define~$E(a, b, c)$ to be the subset of~$\E(a, b, c)$, which may well be empty, of triples generating~$\ssl(2, q)$ (or equivalently, whose images generate~$G$). Finally, we write~$\PE(a, b, c)$ for the image of~$E(a, b, c)$ in~$G^3$.

\begin{lem} The notation behaves as follows. \begin{enumerate}

\item If~$\PE(a, b, c)$ and~$\PE(a', b', c')$ are not disjoint, then they are equal, and $(a', b', c') = (\pm a, \pm b, \pm c)$ for some choices of signs. 

\item We have 
\[ \PE(a, b, c) = \PE(-a, -b, c) = \PE(-a, b, -c) = \PE(a, -b, -c) \, .    \]
In other words, the set~$\PE(a, b, c)$ is not altered when an even number of signs are introduced.

\item When~$abc = 0$, all choices of signs give the same set~$\PE(\pm a, \pm b, \pm c)$.

\item When~$abc \ne 0$, the sets~$\PE(a, b, c)$ and~$\PE(a, b, -c)$ are disjoint.

\end{enumerate}

\end{lem}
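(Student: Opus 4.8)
The plan is to work throughout with the well-known fact that in $\ssl(2,q)$, a non-central element is determined up to conjugacy by its trace, and that the two lifts to $\ssl(2,q)$ of an element of $\psl(2,q)$ differ by sign, hence have traces $a$ and $-a$. Thus an element $[x,y,z] \in \PE(a,b,c)$ records, for each coordinate, a conjugacy class in $\psl(2,q)$, which lifts to a pair of traces $\{a, -a\}$ (and similarly for $b$, $c$). First I would make this precise: $\PE(a,b,c)$ depends only on the image in $G^3$ of the triples in $E(a,b,c)$, so changing $(x,y,z)$ to $(\pm x, \pm y, \pm z)$ for any signs leaves the $G$-triple unchanged --- but the constraint $xyz = 1$ forces an even number of sign changes (if we flip $x \mapsto -x$ and $y \mapsto -y$ then $z$ is unchanged, etc.). This single observation is the engine behind (1) and (2).

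For (2), if $(x,y,z) \in E(a,b,c)$ then $(-x,-y,z)$, $(-x,y,-z)$, $(x,-y,-z)$ are all in $\ssl(2,q)^3$, still satisfy the product-one relation, still generate, and have the same image in $G^3$; their traces are the claimed signed triples, giving $\PE(a,b,c) = \PE(-a,-b,c) = \PE(-a,b,-c) = \PE(a,-b,-c)$. For (1), suppose $\bar t \in \PE(a,b,c) \cap \PE(a',b',c')$; lift $\bar t$ to some $(x,y,z)$, which by membership in both sets forces $\tr(x) \in \{a,-a\} \cap \{a', -a'\}$ after possibly replacing the lift by a signed variant, so $a' = \pm a$ and similarly $b' = \pm b$, $c' = \pm c$; then by (2) we may normalise the signs so that $(a',b',c')$ is one of the four tuples equal to $\PE(a,b,c)$, and equality of the whole sets follows because any other element of $\PE(a',b',c')$ lifts to a triple whose traces, up to the allowed even sign changes, land in $\{(\pm a, \pm b, \pm c)\}$, i.e.\ in the fibre defining $\PE(a,b,c)$.

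For (3), when (say) $c = 0$ the pair $\{c, -c\}$ is a single value, so flipping the sign of $c$ is vacuous; combined with (2), which already lets us flip any two of the three signs freely, every element of the sign group $\{\pm 1\}^3$ acting on $(a,b,c)$ is realised, so all eight sets $\PE(\pm a, \pm b, \pm c)$ coincide. The same argument applies if $a = 0$ or $b = 0$. For (4), assuming $abc \neq 0$, I would argue by contradiction: if $\PE(a,b,c)$ and $\PE(a,b,-c)$ met, then by (1) applied with $(a',b',c') = (a,b,-c)$ we would get $\PE(a,b,c) = \PE(a,b,-c)$; but by (2), $\PE(a,b,-c) = \PE(-a,b,c)$, so a lift $(x,y,z)$ of a common element would have to have $\tr(x)$ simultaneously equal (up to the even-sign-change ambiguity, which for a \emph{single} coordinate is no ambiguity once the other two are pinned down) to both $a$ and $-a$, forcing $a = -a$, i.e.\ $a = 0$ since $q$ is odd --- contradicting $abc \neq 0$. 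The one technical point to handle carefully is exactly this last step: pinning down that the per-coordinate trace of a lift is well-defined once we insist the overall sign pattern be compatible with $xyz = 1$, so that ``$\PE(a,b,c) = \PE(a,b,-c)$ with $abc \neq 0$'' really is impossible rather than merely unexpected. This bookkeeping about which of the $2^3$ sign changes are ``even'' is the only place where one can slip, and it is where I would be most careful; everything else is the standard trace/conjugacy dictionary in $\ssl(2,q)$.
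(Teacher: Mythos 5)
Your proof follows essentially the same route as the paper's: part (2) by negating two coordinates of a lift, part (3) from $0=-0$, part (4) from the fact that a nonzero trace pins down which of the two lifts of a coordinate one has, and part (1) from the trace analysis of lifts. Two points of bookkeeping need attention, both fixable with the ``engine'' you state at the outset. First, in (1) the step ``by (2) we may normalise the signs so that $(a',b',c')$ is one of the four tuples'' is not automatic: a priori $(a',b',c')$ could be an \emph{odd} sign change of $(a,b,c)$, e.g.\ $(a,b,-c)$ with $abc\ne 0$, and no application of (2) repairs that. You must either invoke explicitly your observation that any two lifts of the same $G$-triple satisfying the product-one relation differ by an even sign change (so that $(a',b',c')$ is in fact forced to be an even variant of $(a,b,c)$), or do as the paper does: establish (2)--(4) first, deduce that the sets $\PE(\pm a,\pm b,\pm c)$ are pairwise equal or disjoint, and conclude that non-disjointness gives equality. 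Second, your (4) appeals to (1), which in your ordering creates an apparent circularity, since your (1) rests on the normalisation just discussed; the detour is also unnecessary, because the contradiction you extract --- the lifts of the two coordinates with nonzero, unchanged traces are pinned down, hence so is the third, whose trace cannot be both $c$ and $-c$ --- is exactly the paper's direct argument and needs no input from (1) or (2).
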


\begin{proof}
(1) An element~$(g,h,k) \in \PE(a, b, c)$ is of the form~$(\bar x, \bar y, \bar z)$, where~$x, y, z \in \ssl(2, q)$ and the bar denotes the morphism to~$G$, where the traces of these elements are~$a, b, c$ respectively. If~$(g, h, k)$ also belongs to~$\PE(a', b', c')$, given that the possible lifts of~$g, h, k$ are~$\pm x, \pm y, \pm z$ respectively, we see that~$a'= \pm a$, $b' = \pm b$, $c' = \pm c$. The fact that~$\PE(a, b, c) = \PE(a', b', c')$ will follow from (2)-(3)-(4) (since these properties imply that~$\PE(a, b, c)$ and~$\PE(\pm a, \pm b, \pm c)$ are either equal or disjoint).

(2) If~$(x, y, z) \in E(a, b, c)$, then~$(-x, -y, z) \in E(-a, -b, c)$, and these two triples map to the same element in~$G^3$. This shows that an element of~$\PE(a, b, c)$ also belongs to~$\PE(-a, -b, c)$, and conversely. The other arguments are similar.

(3) If~$abc= 0$, then one of~$a, b, c$ is~$0$, say~$a=0$, so that~$a = -a$. We are thus free to change the sign of~$a$, and an even number of other signs, which gives the result.

(4) If~$x' = \pm x$, and~$\tr(x') = \tr(x) \ne 0$, then~$x'= x$. We see thus that, whenever two triples~$(x, y, z) \in E(a, b, c)$ and~$(x', y', z') \in E(a, b, -c)$ map to the same element of~$G^3$, we must have~$x' = x$ and $y' = y$, so that~$z'= z$ since~$xyz= 1 = x'y'z'$. This is a contradiction since the traces of~$z$ and~$z'$ are~$c \ne 0$ and~$-c$. As a result, $\PE(a, b, c)$ and $\PE(a, b, -c)$ are disjoint in this case.
\end{proof}

\begin{ex}
Trying the example of~$\psl(2, 5)$, one finds that~$\PE(0, 2, 3)$ is non-empty, showing that the case~$abc= 0$ does occur non-trivially. The set~$\PE(2, 2, 4)$ is also non-empty, as is~$\PE(2, 2, -4)$, so the case~$abc \ne 0$ occurs and states here the disjointness of non-empty sets. However, $\PE(1, 2, 4)$ is non-empty, but $\PE(1, 2, -4)$ {\em is} empty, an instance where (4) still holds, but in a degenerate way.
\end{ex}

We define finally 
\[ \T(a, b, c) = \bigcup_{\textnormal{signs}} \PE(\pm a, \pm b, \pm c) = \PE(a, b, c) \cup \PE(a, b, -c) \, .   \]
This is a subset of~$\T$, and~$\T(a, b, c)/G$ is a subset of~$\T/G$. As~$(a, b, c)$ varies, the subsets~$\T(a, b, c)/G$ are disjoint, and constitute an initial partition of~$\T/G$.

\begin{lem}
The subset~$\T(a, b, c)/G$ is stable under the action of~$\gtz(G)$.
\end{lem}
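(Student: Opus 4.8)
The plan is to show that $\gtz(G)$ preserves the trace data attached to a triple, up to the ambiguity coming from lifting $G$-elements back to $\ssl(2,q)$. I would proceed as follows. Fix $\phi \in \gtz(G)$ and an element $t = [g,h,k] \in \T(a,b,c)/G$; by definition of $\T(a,b,c)$ we may assume $t$ lies in $\PE(a,b,c)/G$, so there is a lift $(x,y,z) \in E(a,b,c)$ of a representative. Write $\phi(t) = [g',h',k']$ and choose a lift $(x',y',z') \in E(a',b',c')$ for the appropriate traces $a' = \tr(x')$, $b' = \tr(y')$, $c' = \tr(z')$. The first key step is that, since $\phi$ is compatible with the relation $\equiv$, the element $x'$ is a conjugate (in $G$, hence the lift $x'$ is $\pm$-conjugate in $\ssl(2,q)$) of $x$, and similarly for the other two coordinates; this forces $a' = \pm a$, $b' = \pm b$, $c' = \pm c$ for some choice of signs, exactly as in part~(1) of the first Lemma of this section. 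Hence $\phi(t) \in \PE(\pm a, \pm b, \pm c)/G \subseteq \T(a,b,c)/G$.

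The second step is to make sure this argument is well-posed, i.e.\ independent of the chosen representative and lift. For this I would invoke the earlier Lemma describing the behaviour of the notation: the sets $\PE(\pm a, \pm b, \pm c)$ for the various sign choices are either all equal or pairwise related by the parity rules in parts~(2)--(4), and in every case their union is exactly $\T(a,b,c)$. So regardless of which signs occur, $\phi(t)$ lands in $\T(a,b,c)/G$. Combined with the fact — already recorded in the excerpt — that the sets $\T(a,b,c)/G$ form a partition of $\T/G$ as $(a,b,c)$ varies, this shows $\phi$ maps $\T(a,b,c)/G$ into itself.

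Finally, to upgrade ``maps into'' to ``stable under'', I would note that $\phi$ is a permutation of $\T/G$, so $\phi^{-1}$ also belongs to $\gtz(G)$ and the same reasoning applies to it; since $\T(a,b,c)/G$ is mapped into itself by both $\phi$ and $\phi^{-1}$, it is mapped onto itself by $\phi$. (Alternatively one can argue by finiteness: an injective self-map of the finite set $\T(a,b,c)/G$ is bijective.)

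**Main obstacle.** The only genuine point requiring care is the passage from ``$g'$ is conjugate to $g$ in $G$'' to ``$\tr(x') = \pm \tr(x)$'' for the $\ssl(2,q)$-lifts — one must check that conjugacy in $\psl(2,q)$ lifts to conjugacy-up-to-sign in $\ssl(2,q)$, which is where the structure of the central extension $1 \to \{\pm 1\} \to \ssl(2,q) \to \psl(2,q) \to 1$ enters. Everything else is bookkeeping with the sign rules from the preceding Lemma, so I expect the proof to be short.
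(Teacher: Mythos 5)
Your argument is correct and follows essentially the same route as the paper's own proof: compatibility with $\equiv$ gives coordinate-wise conjugacy in $G$, lifting to $\ssl(2,q)$ introduces the sign ambiguity on traces, and the sign rules from the preceding lemma place the image in $\T(a,b,c)/G$. Your additional remarks on well-posedness and on upgrading ``maps into'' to ``maps onto'' are points the paper leaves implicit, but they are routine and do not constitute a different approach.
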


\begin{proof}
Suppose~$\phi \in \gtz(G)$, and~$\phi([g, h, k]) = [g', h', k']$, with $g, h, k, g', h', k' \in G$. Since~$\phi$ is compatible with~$\equiv$ by definition, we see that~$g'$ is conjugate to~$g$ within~$G$; writing~$g = \bar x$ for~$x \in \ssl(2, q)$, and similary~$g' = \bar{x'}$, we conclude that~$x'$ is a conjugate of~$\pm x$, so~$\tr(x') = \pm \tr(x)$. Similar considerations apply to~$h$ and~$h'$, and to~$k$ and~$k'$.

We conclude that if~$(g, h, k) \in \PE(a, b, c)$, then~$(g', h', k') \in \PE(\pm a, \pm b, \pm c)$, as we wanted.
\end{proof}

\begin{rmk} \label{rmk-Tabc-equiv}
Similar arguments show that~$\T(a, b, c)/G$ is a union of equivalence classes for~$\equiv$.
\end{rmk}

\subsection{Number of conjugacy classes of triples}

The action of~$G$ on~$\T$ by (simultaneous) conjugation restricts to an action on each set~$\PE(a, b, c)$, clearly. Moreover, let us introduce the automorphism~$\alpha $ of~$G$ induced by conjugation by 
\[ \left(\begin{array}{rr}
1 & 0 \\
0 & -1
\end{array}\right) \in \operatorname{GL}(2, q) \smallsetminus \ssl(2, q) \, .  \]
One verifies that~$\alpha $ is not inner (below we recall the description of~$Out(G)$). Moreover, since conjugate matrices have the same trace, we see that the action of~$\alpha $ on the triples in~$\T$ also preserves the sets~$\PE(a, b, c)$.

\begin{prop}[after MacBeath]
When~$\PE(a, b, c)$ is non-empty, it is made of precisely two conjugacy classes, which are exchanged by~$\alpha $.
\end{prop}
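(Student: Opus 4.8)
The plan is to bootstrap everything from two results of MacBeath~\cite{mac}. Since $\PE(a,b,c)$ is non-empty, there is a triple $(x,y,z)\in E(a,b,c)$ with $\langle x,y,z\rangle=\ssl(2,q)$; arguing exactly as in the proof of Proposition~\ref{prop-mac-mod-2}, Theorem~2 of~\cite{mac} --- to the effect that a singular triple generates an affine, hence proper, subgroup --- forces $(a,b,c)$ to be non-singular. For non-singular type, Theorem~3 of~\cite{mac} asserts that any two triples in $\E(a,b,c)$ are conjugate by a \emph{unique} element of $\operatorname{PGL}(2,q)$. In particular $E(a,b,c)=\E(a,b,c)$ (being a generating triple is conjugacy-invariant, and one such triple has been exhibited), and $\E(a,b,c)$ is a single orbit on which $\operatorname{PGL}(2,q)$ acts freely.

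Next I would transfer this to $G^3$. The group $G=\psl(2,q)$ is normal of index $2$ in $\operatorname{PGL}(2,q)$, which thus acts on $G^3$ by simultaneous conjugation, and the surjection $\ssl(2,q)^3\to G^3$ carries $E(a,b,c)$ onto $\PE(a,b,c)$ compatibly with the $\operatorname{GL}(2,q)$- and $\operatorname{PGL}(2,q)$-actions. Hence $\PE(a,b,c)$ is a single $\operatorname{PGL}(2,q)$-orbit, and it is still free: the stabiliser of $(g,h,k)\in\PE(a,b,c)$ equals $C_{\operatorname{PGL}(2,q)}(\langle g,h,k\rangle)=C_{\operatorname{PGL}(2,q)}(G)$, which is trivial --- an element centralising $G$, lifted to $\operatorname{GL}(2,q)$, centralises $\ssl(2,q)$ up to scalars, hence exactly (since $\ssl(2,q)$ is perfect), hence is scalar by Schur's lemma. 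A single orbit on which a group acts freely decomposes, under a subgroup of index $2$, into exactly two orbits; so $\PE(a,b,c)$ consists of precisely two $G$-conjugacy classes of triples.

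Finally, since $\operatorname{PGL}(2,q)$ acts transitively on $\PE(a,b,c)$, it permutes these two $G$-orbits transitively, so the stabiliser of either orbit in $\operatorname{PGL}(2,q)$ has index $2$ and hence equals $G$; anything outside $G$ interchanges the two orbits. The automorphism $\alpha$ is conjugation by an element of $\operatorname{GL}(2,q)\smallsetminus\ssl(2,q)$ and has already been seen to be non-inner, so it is represented by the non-trivial coset of $G$ in $\operatorname{PGL}(2,q)$; therefore $\alpha$ exchanges the two classes, as claimed.

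The one point I expect to require care is purely a matter of reading~\cite{mac} accurately: extracting the versions of Theorems~2 and~3 valid over $\f_q$ itself for triples of non-singular type (a single $\operatorname{PGL}(2,q)$-orbit, trivial stabiliser), and being careful that it is the non-emptiness of $\PE(a,b,c)$ --- rather than of $\E(a,b,c)$ --- that is invoked, since non-singular triples generating a proper subfield subgroup or one of the ``small'' exceptional subgroups would otherwise have to be ruled out. Granting that, the counting of classes and the identification of $\alpha$'s action are routine.
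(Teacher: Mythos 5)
Your proof is correct and rests on the same external inputs as the paper's --- MacBeath's Theorem~2 (to force non-singularity of $(a,b,c)$ from the existence of a generating triple) and Theorem~3 (to control $\E(a,b,c)$), together with the fact that $\alpha$ represents the non-trivial coset of $\psl(2,q)$ in $\operatorname{PGL}(2,q)$ --- but it organises the counting differently. The paper quotes Theorem~3(i) in the form ``exactly two $\ssl(2,q)$-classes'' and uses the non-innerness of $\alpha$ to show that a generating triple and its $\alpha$-image are never conjugate, whence $E(a,b,c)=\E(a,b,c)$ and the two classes survive, still separated by $\alpha$, after passing to $G^3$. You instead read Theorem~3 as saying that $\operatorname{PGL}(2,q)$ acts regularly on $\E(a,b,c)$, push regularity down to $\PE(a,b,c)$ (transitivity by equivariance, freeness by the computation $C_{\operatorname{PGL}(2,q)}(G)=1$), and then read off ``two $G$-orbits, interchanged by anything outside $G$'' from $[\operatorname{PGL}(2,q):G]=2$. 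Your centraliser computation is a clean self-contained replacement for the paper's ``lest $\alpha$ be inner'' step, and the torsor picture makes the exchange by $\alpha$ automatic rather than a separate argument. The one dependence to watch, which you flag yourself, is that you need $\operatorname{GL}(2,q)$-transitivity on $\E(a,b,c)$ and not merely the $\ssl(2,q)$-class count; should Theorem~3 supply only the count, transitivity is still recoverable exactly by the paper's observation that $\alpha$ carries a generating triple to a non-conjugate one inside the same set. Finally, a caveat shared equally by both arguments: they invoke the earlier assertion that conjugation by $\operatorname{diag}(1,-1)$ is non-inner on $\psl(2,q)$, which holds only when $-1$ is a non-square in $\f_q$, i.e.\ $q\equiv 3\pmod 4$; for $q\equiv 1\pmod 4$ one should conjugate by $\operatorname{diag}(1,\nu)$ with $\nu$ a non-square, a substitution that changes nothing else in either proof.
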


\begin{proof}
First we argue as in Proposition~\ref{prop-mac-mod-2}, relying on (i) of Theorem 3 in~\cite{mac}. The conclusion is that when~$E(a, b, c)$ is non-empty, that is when~$\E(a, b, c)$ contains a triple generating~$\ssl(2,q)$, then~$\E(a, b, c)$ consists of two conjugacy classes exactly.

If~$(x, y, z) \in E(a, b, c)$, then~$(\alpha (x), \alpha (y), \alpha (z))$ cannot be in the conjugacy class of~$(x, y, z)$, lest we should conclude that~$\alpha $ is inner (here we view~$\alpha $ as an automorphism of~$\ssl(2, q)$, rather than~$G$). However~$(\alpha (x), \alpha (y), \alpha (z)) \in E(a, b, c)$, showing that~$E(a, b, c)$ intersects both conjugacy classes in~$\E(a, b, c)$, and that~$E(a, b, c) = \E(a, b, c)$.

When~$\alpha $ is viewed as an automorphism of~$G$, it is still non-inner. So the same reasoning applies, showing that there are triples in~$\PE(a, b, c)$ which are not conjugate to one another, and more precisely that~$(g, h, k)$ and~$(\alpha (g), \alpha (h), \alpha (k))$ are never conjugate. The Proposition has been proved.
\end{proof}

The cardinality of~$\PE(a, b, c)/G$ is thus~$2$, when it is not~$0$; and~$\T(a, b, c)/G$ contains~$2$ or~$4$ elements (or~$0$). These sets are unions of orbits of~$\alpha $ (recall that~$Out(G)$ acts freely on~$\T/G$).

\subsection{The action of~$H$}

Recall that we write~$H= Out(G) \times S_3$. According to~\cite{wilson}, Theorem 3.2, when~$G= \psl(2, p^s)$ with~$p$ odd, we have~$Out(G) = \langle \alpha \rangle \times \Gal(\f_{p^s} / \f_p) \cong C_2 \times C_s$. Here~$\alpha $ is as above, and the Galois group acts on matrix entries in the obvious way. In particular, note that~$\alpha $ is central in~$H$.

Now suppose that~$(a, b, c)$ is a fixed triple, and let~$H_0$ denote the subgroup of~$H$ leaving the subset~$\T(a, b, c)/G$ stable, assuming the latter is non-empty. Note that~$\alpha \in H_0$.

\begin{lem} \label{lem-centralizer-D8}
The permutation group induced by~$H_0$ on the set~$\T(a, b, c)/G$ is isomorphic to either~$C_2$, or~$C_2^2$, or~$D_8$. The same can be said of the centralizer of this permutation group in the symmetric group~$S(\T(a, b, c)/G)$.
\end{lem}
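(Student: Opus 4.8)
The plan is to treat separately the two possibilities for the cardinality of~$\T(a,b,c)/G$, which by the discussion preceding the statement is either~$2$ or~$4$ (we assume throughout it is non-empty). Write~$\bar{H_0}$ for the permutation group induced by~$H_0$, and~$\bar\alpha$ for the permutation induced by~$\alpha$. Since~$\alpha$ is central in~$H$ it is central in~$H_0$, so~$\bar\alpha$ lies in the centre of~$\bar{H_0}$; and by the previous Proposition~$\alpha$ exchanges the two conjugacy classes making up each non-empty~$\PE(\pm a,\pm b,\pm c)/G$, so~$\bar\alpha$ is a fixed-point-free involution of~$\T(a,b,c)/G$. When~$\T(a,b,c)/G$ has two elements — which happens precisely when the sets~$\PE(a,b,c)/G$ and~$\PE(a,b,-c)/G$ coincide, or exactly one of them is non-empty — the involution~$\bar\alpha$ is the transposition, so~$\bar{H_0}$ is the full symmetric group~$S_2\cong C_2$, whose centralizer in~$S_2$ is~$S_2\cong C_2$; this settles that case.

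So suppose~$\T(a,b,c)/G$ has four elements and identify its symmetric group with~$S_4$. Then~$\bar\alpha$ has cycle type~$(2,2)$, and since~$\bar{H_0}$ centralizes it, $\bar{H_0}$ is contained in~$C_{S_4}(\bar\alpha)$, which is a dihedral group of order~$8$. A direct inspection shows that the subgroups of this~$D_8$ containing the central involution~$\bar\alpha$ are exactly: $\langle\bar\alpha\rangle\cong C_2$; the cyclic rotation subgroup~$\cong C_4$; two Klein four-groups~$\cong C_2^2$ (one of which is the normal~$V_4\le S_4$); and~$D_8$ itself. It therefore suffices to rule out the possibility~$\bar{H_0}\cong C_4$, after which the claim about centralizers follows by checking the three surviving cases one at a time: the centralizer of~$\langle\bar\alpha\rangle\cong C_2$ in~$S_4$ is all of~$C_{S_4}(\bar\alpha)\cong D_8$; the centralizer of either Klein four-subgroup equals that subgroup, isomorphic to~$C_2^2$ (for the normal~$V_4$ because~$S_4/V_4\cong S_3$ acts faithfully on it, and for the intransitive one because any permutation centralizing it must preserve each of its two $2$-element blocks, hence lies in it); and the centralizer of~$D_8 = C_{S_4}(\bar\alpha)$ is its own centre~$\langle\bar\alpha\rangle\cong C_2$.

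The substantive step — and the main obstacle — is the exclusion of~$C_4$. Suppose~$\bar{H_0}=\langle\bar h\rangle\cong C_4$ for some~$h\in H_0$, and write~$h=(\omega,\tau)$ with~$\omega\in Out(G)=\langle\alpha\rangle\times\Gal(\f_q/\f_p)$ and~$\tau\in S_3$. Every element of~$S_3$ has order dividing~$6$, so~$h^6=(\omega^6,1)$ lies in~$Out(G)\times\{1\}$; moreover~$\omega^6$ is an even power, hence lies in the Galois factor, so~$\omega^6\alpha$ has non-trivial~$\langle\alpha\rangle$-component and is thus non-trivial in~$Out(G)$. On the other hand~$\bar h$ has order~$4$, so the unique element of order~$2$ in~$\langle\bar h\rangle$ is~$\bar h^2$, which must therefore equal~$\bar\alpha$; hence~$h^6$ induces~$\bar h^6=\bar h^2=\bar\alpha$, the same permutation as~$\alpha$. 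Consequently the non-trivial outer automorphism~$\omega^6\alpha$ induces the identity permutation of the non-empty set~$\T(a,b,c)/G$, contradicting the fact that~$Out(G)$ acts freely on~$\T/G$. This forces~$\bar{H_0}\not\cong C_4$ and, together with the previous paragraph, completes the argument.

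I expect the rest of the verification (the cardinality dichotomy for~$\T(a,b,c)/G$ and the finite-group bookkeeping inside~$D_8$ and~$S_4$) to be entirely routine; the leverage in the hard step is the product decomposition~$H = Out(G)\times S_3$ combined with the freeness of the~$Out(G)$-action on~$\T/G$, which together prevent the ``outer/Galois'' directions from contributing a genuine $4$-cycle.
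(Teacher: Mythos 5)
Your proof is correct and follows essentially the same route as the paper: embed the induced permutation group in the centralizer~$D_8$ of the fixed-point-free involution~$\bar\alpha$, and exclude~$C_4$ by exhibiting a power of the putative generator that lies in the Galois factor of~$Out(G)$ yet would have to act as~$\alpha$ does, contradicting the freeness of the~$Out(G)$-action. Your sixth-power shortcut is merely a uniform packaging of the paper's case analysis on the order of the~$S_3$-component, and your explicit centralizer computations in~$S_4$ match the paper's observation that the centralizer in~$D_8$ of a subgroup that is not cyclic of order~$4$ is again not cyclic of order~$4$.
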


\begin{proof}
If~$\T(a, b, c)/G$ has only~$2$ elements, there is nothing to prove, so we turn to the alternative, namely, we assume that this set has~$4$ elements. These are freely permuted by~$\alpha $, which has order~$2$, so they may be numbered~$1, 2, 3, 4$ in such a way that~$\alpha $ acts as~$(12)(34)$.

The centraliser of~$\alpha $ in~$S_4$ is isomorphic to~$D_8$, generated, say, by~$(12)$ and~$(13)(24)$. Since~$\alpha $ is central in~$H$, we have a map~$H_0 \longrightarrow D_8$, and the first part of the Lemma is about its image. The non-trivial subgroups of~$D_8$ are all of the form indicated, {\em except} for the presence of cyclic groups of order~$4$.

So we assume that 
\[  h= \alpha^i \sigma \pi \in \langle \alpha \rangle \times \Gal(\f_q/\f_p) \times S_3 = H  \]
belongs to~$H_0$ and acts as a~$4$-cycle on~$\T(a, b, c)/G$, and work towards a contradiction.

First, we may replace~$h$ by~$\alpha h$ if necessary, and assume that~$i= 0$, that is~$h= \sigma \pi$. The element~$\pi \in S_3$ has order~$1$, $2$ or~$3$; if it has order~$3$, we replace~$h$ by~$h^3 = \sigma^3 \pi^3 = \sigma^3$ and we are reduced to the case when~$\pi= 1$. So we assume that the order of~$\pi$ divides~$2$.

Elements of order~$4$ in~$D_8$, when squared, give the non-trivial central element, here~$(12)(34)$. Thus~$h^2 = \sigma^2$ acts as~$\alpha $ does. However, this is a contradiction, since~$\alpha $ and~$\sigma $ belong to~$Out(G)$, which acts {\em freely} on~$\T/G$, while~$\alpha = \sigma ^2$ does not hold.

This proves the first part. For the second part, since~$\alpha \in H_0$, we note that the centralizer in question must centralize~$(12)(34)$, so it is a subgroup of the~$D_8$ under consideration. The centralizer, in~$D_8$, of a subgroup which is not cyclic of order~$4$ is again not cyclic of order~$4$, as is readily checked.
\end{proof}

\subsection{The partition of~$\T/G$} We now let 
\[ X(a, b, c)  = \bigcup_{h \in H} h \cdot \T(a, b, c)/G \, .   \]
As~$a, b, c$ vary, the subsets~$X(a, b, c)$ provide a partition of~$\T/G$. Note that, given the description of~$H$ (and~$Out(G)$), we certainly have, for any~$h \in H$, 
\[ h \cdot \T(a, b, c)/G  = \T(a', b', c')/G \]
for some~$a', b', c'$.

\begin{lem}
Let~$\gtz(G)_{abc}$ be the permutation group on~$X(a, b, c)$, consisting of those permutations commuting with the action of~$H$, and compatible with the relation~$\equiv$. Then~$\gtz(G)$ is the direct product of the various groups~$\gtz(G)_{abc}$.
\end{lem}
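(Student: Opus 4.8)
The plan is to exhibit~$\gtz(G)$ as a direct product by showing that the defining conditions on a permutation~$\phi$ decouple completely across the blocks~$X(a,b,c)$ of the partition of~$\T/G$. Concretely, I would argue in two directions. First, the easy inclusion: if~$\phi \in \gtz(G)$, then since each~$X(a,b,c)$ is a union of~$H$-orbits, and since~$\phi$ commutes with~$H$, the set~$\phi(X(a,b,c))$ is again a union of~$H$-orbits; one checks it must be~$X(a,b,c)$ itself. The key point here is that $\phi$ preserves traces up to sign --- this is exactly the content of the lemma stating that~$\T(a,b,c)/G$ is $\gtz(G)$-stable --- so $\phi$ sends a class in~$\T(a,b,c)/G$ to a class in some~$\T(\pm a, \pm b, \pm c)/G$, and all of these lie in the single block~$X(a,b,c)$. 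Hence $\phi$ restricts to a permutation~$\phi_{abc}$ of each~$X(a,b,c)$; this restriction obviously still commutes with~$H$ and is still compatible with~$\equiv$ (both conditions are ``local'' once we know the block is preserved), so~$\phi_{abc} \in \gtz(G)_{abc}$. This gives an injective group homomorphism~$\gtz(G) \hookrightarrow \prod_{(a,b,c)} \gtz(G)_{abc}$, where the product runs over a set of representatives for the blocks.

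For the reverse inclusion, I would take an arbitrary family~$(\phi_{abc})$ with each~$\phi_{abc} \in \gtz(G)_{abc}$ and assemble them into a single permutation~$\phi$ of~$\T/G$, using that the~$X(a,b,c)$ genuinely partition~$\T/G$ (as asserted just before the statement). It remains to check that this~$\phi$ lies in~$\gtz(G)$, i.e.\ that it commutes with~$H$ and respects~$\equiv$. Both verifications reduce to the block level because the relevant structures do not mix blocks: the~$H$-action preserves each~$X(a,b,c)$ by construction (for $h\in H$ we have $h\cdot\T(a,b,c)/G = \T(a',b',c')/G \subseteq X(a,b,c)$, so $h$ maps $X(a,b,c)$ into itself, hence onto itself), and similarly~$\equiv$ never relates classes in distinct blocks --- this is Remark~\ref{rmk-Tabc-equiv}, which says each~$\T(a,b,c)/G$, and a fortiori each~$X(a,b,c)$, is a union of~$\equiv$-classes. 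So for~$t \in X(a,b,c)$ and~$h \in H$ we have~$\phi(h\cdot t) = \phi_{abc}(h \cdot t) = h \cdot \phi_{abc}(t) = h\cdot\phi(t)$, and likewise for~$\equiv$-compatibility, completing the argument that~$\phi \in \gtz(G)$ and that the two constructions are mutually inverse.

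The one genuinely substantive point --- the ``main obstacle'', though it is more a matter of care than of difficulty --- is verifying that~$H$ and~$\equiv$ really do respect the block decomposition, so that the defining conditions of~$\gtz(G)$ are block-local. For~$\equiv$ this is precisely Remark~\ref{rmk-Tabc-equiv} together with the fact that~$X(a,b,c)$ is a union of the~$\T$-pieces; for~$H$ one uses that~$Out(G)$ only permutes traces via the Galois action (which preserves the unordered collection of sign-classes of a trace triple) and that~$S_3$ permutes the three coordinates, so in all cases an element of~$H$ carries~$\T(a,b,c)/G$ to~$\T(a',b',c')/G$ for a trace triple~$(a',b',c')$ in the same~$H$-orbit, which by definition lands inside~$X(a,b,c)$. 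Once these closure facts are in hand, the direct-product statement is essentially formal: a permutation commuting with a group action that preserves a partition, and compatible with an equivalence relation refined by that partition, is the same data as a family of such permutations on the individual blocks. I would therefore keep the writeup short, stating the bijection~$\gtz(G) \cong \prod_{(a,b,c)} \gtz(G)_{abc}$ and checking it is a group isomorphism by noting composition is computed blockwise.
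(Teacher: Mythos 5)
Your argument is correct and is essentially the paper's proof: the paper simply asserts this as ``a completely general fact'' about partitions that are $H$-stable and unions of $\equiv$-classes, and your writeup fills in the details of exactly that decoupling argument. You also rightly flag the one point the paper's one-liner glosses over, namely that each block $X(a,b,c)$ is itself $\gtz(G)$-stable, which you correctly derive from the earlier lemma on the stability of $\T(a,b,c)/G$.
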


\begin{proof}
This is a completely general fact: when~$\T/G$ is partitioned into subsets which are stable under the action of~$H$, and which are unions of equivalence classes for~$\equiv$, then~$\gtz(G)$ splits as a corresponding direct product, as one sees from the definition.
\end{proof}

Now suppose~$a, b, c$ are fixed, and resume the notation~$H_0$ from the previous section.

\begin{lem}
The permutation group~$\gtz(G)_{abc}$ is isomorphic to one of~$\{ 1 \}$, $C_2$, $C_2^2$, or~$D_8$.
\end{lem}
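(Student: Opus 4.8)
The plan is to reduce the computation of~$\gtz(G)_{abc}$ to a statement about the group~$X(a, b, c)$ viewed as an~$H$-set, and then to exploit the structure established in Lemma~\ref{lem-centralizer-D8}. First I would observe that~$X(a, b, c)$ is, by construction, a single orbit of~$\T(a, b, c)/G$ under~$H$ — or rather, it is the union of the~$H$-translates of the ``block''~$\T(a, b, c)/G$. Since~$\gtz(G)_{abc}$ consists of permutations of~$X(a, b, c)$ commuting with the~$H$-action (and compatible with~$\equiv$), the standard fact about centralizers of transitive-on-blocks actions applies: a permutation commuting with~$H$ is determined by its effect on the single block~$\T(a, b, c)/G$, because every other block is an~$H$-translate of it and the commuting condition propagates the action. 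More precisely, restriction gives an injective homomorphism from~$\gtz(G)_{abc}$ into the group of permutations of~$\T(a, b, c)/G$ that commute with the induced action of the stabilizer~$H_0$ of that block — that is, into the centralizer of the permutation group induced by~$H_0$ on~$\T(a, b, c)/G$, inside~$S(\T(a, b, c)/G)$.

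The second step is then immediate from Lemma~\ref{lem-centralizer-D8}: that centralizer is isomorphic to~$C_2$, $C_2^2$, or~$D_8$ (or is trivial in the degenerate case where~$\T(a,b,c)/G$ is a single orbit with no further structure, though in fact~$\alpha\in H_0$ always acts, so the~$2$-element case gives~$C_2$). Hence~$\gtz(G)_{abc}$, being a subgroup of this centralizer, must itself be isomorphic to one of~$\{1\}$, $C_2$, $C_2^2$, or~$D_8$ — the only proviso being that a subgroup of~$D_8$ could also be cyclic of order~$4$, so I must rule that out. But the argument in Lemma~\ref{lem-centralizer-D8} already shows that the centralizer of the~$H_0$-image in~$S_4$ is not cyclic of order~$4$; and a subgroup of a group that is~$C_2$, $C_2^2$, or~$D_8$ but which contains no element of order~$4$ is again one of~$\{1\}$, $C_2$, $C_2^2$, or~$D_8$. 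So no element of order~$4$ can appear, and the classification holds.

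The main obstacle — or rather the one point requiring a little care — is the claim that restriction to the block~$\T(a, b, c)/G$ is injective on~$\gtz(G)_{abc}$ and lands in the centralizer of the~$H_0$-action. Injectivity uses that~$X(a,b,c)$ is covered by~$H$-translates of the block: if~$\phi$ fixes every element of~$\T(a,b,c)/G$ and commutes with~$H$, then for any~$t' = h\cdot t$ with~$t$ in the block we have~$\phi(t') = \phi(h\cdot t) = h\cdot \phi(t) = h\cdot t = t'$, so~$\phi$ is the identity. That the image commutes with the~$H_0$-action is because~$H_0$ is exactly the subgroup of~$H$ preserving the block, so the restriction of any~$H$-commuting~$\phi$ commutes with the restricted~$H_0$-action; one should note that the compatibility with~$\equiv$ only cuts the group down further, which is harmless for an upper bound. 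With these observations the Lemma follows, and I would present it in essentially this order: set up restriction to the block, prove injectivity, identify the target as the centralizer of Lemma~\ref{lem-centralizer-D8}, and then invoke that Lemma together with the remark that no cyclic group of order~$4$ can occur as a subgroup.

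\begin{proof}
Write~$B = \T(a, b, c)/G$ for the block, so that~$X(a, b, c) = \bigcup_{h \in H} h \cdot B$, and recall that~$H_0$ is the subgroup of~$H$ stabilizing~$B$. Let~$\phi \in \gtz(G)_{abc}$. Since~$\phi$ commutes with~$H$, its restriction to~$B$ commutes with the permutation group induced by~$H_0$ on~$B$; thus restriction defines a homomorphism

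\[ \rho \colon \gtz(G)_{abc} \longrightarrow C_B \, , \]

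where~$C_B$ is the centralizer, in~$S(B)$, of the permutation group induced by~$H_0$. This~$\rho$ is injective: if~$\phi$ restricts to the identity on~$B$, then for any~$h \in H$ and~$t \in B$ we have~$\phi(h \cdot t) = h \cdot \phi(t) = h \cdot t$, so~$\phi$ is the identity on all of~$X(a, b, c)$.

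By Lemma~\ref{lem-centralizer-D8}, the group~$C_B$ is isomorphic to~$C_2$, $C_2^2$, or~$D_8$ (when~$B$ has~$4$ elements), and to~$C_2$ when~$B$ has~$2$ elements, since~$\alpha \in H_0$ acts non-trivially there. In particular~$C_B$ contains no element of order~$4$. Hence~$\gtz(G)_{abc}$, being isomorphic to a subgroup of~$C_B$, contains no element of order~$4$ either; and a subgroup of a group isomorphic to~$C_2$, $C_2^2$, or~$D_8$ which has no element of order~$4$ is isomorphic to~$\{1\}$, $C_2$, $C_2^2$, or~$D_8$. This proves the claim.
\end{proof}
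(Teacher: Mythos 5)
Your first step --- restriction to the block $B = \T(a,b,c)/G$ is injective and lands in the centralizer $C_B$ of the $H_0$-action --- is exactly the paper's argument and is fine. The gap is in the second step: you assert that ``in particular $C_B$ contains no element of order $4$,'' but this is false when $C_B \cong D_8$. The dihedral group of order $8$ certainly contains elements of order $4$ (in the concrete realization of Lemma~\ref{lem-centralizer-D8}, the subgroup of $S_4$ generated by $(12)$ and $(13)(24)$ contains the $4$-cycle $(1324)$). Lemma~\ref{lem-centralizer-D8} only tells you that the centralizer is not \emph{itself} cyclic of order $4$; it says nothing about whether a \emph{subgroup} of that centralizer --- namely the image of $\gtz(G)_{abc}$ --- could be the cyclic subgroup of order $4$ sitting inside $D_8$. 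Your concluding sentence (``a subgroup \dots which has no element of order $4$ is isomorphic to \dots'') is logically correct but its hypothesis is exactly what has not been established, so the possibility $\gtz(G)_{abc} \cong C_4$ remains open in your argument.

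This is precisely the case the paper spends most of its proof on, and the missing ingredient is the compatibility with $\equiv$, which you explicitly set aside as ``harmless for an upper bound'' --- it is in fact essential. The paper argues: if the image contained a $4$-cycle, then $\gtz(G)$ would act transitively on $\T(a,b,c)/G$, forcing the relation $\equiv$ to have a single class on that set (and hence on each $H$-translate, so the translates \emph{are} the $\equiv$-classes on $X(a,b,c)$, by Remark~\ref{rmk-Tabc-equiv}). But then the \emph{full} centralizer $C \cong D_8$, extended to $X(a,b,c)$ by commutation with $H$, is automatically compatible with $\equiv$, so $\gtz(G)_{abc}$ contains all of $D_8$ and in particular is not cyclic of order $4$. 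Note the conclusion is not that a $4$-cycle cannot occur, but that its occurrence forces the whole of $D_8$ to occur; without this use of $\equiv$ there is no way to exclude $C_4$ from the list.
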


\begin{proof}
Since the action of~$\gtz(G)_{abc}$ commutes with that of~$H$, it is determined by its restriction to~$\T(a, b, c)/G$. In other words, the map~$\gtz(G)_{abc} \to S(\T(a, b, c)/G)$, which is well-defined since~$\T(a,b,c)/G$ is stable under~$\gtz(G)$, is injective.

The image~$\Gamma $ of that map is a permutation group which commutes with the action of~$H_0$, and so by Lemma~\ref{lem-centralizer-D8} it is a subgroup of either~$C_2$, $C_2^2$ or~$D_8$. Thus it remains to prove that~$\Gamma $ is not cyclic of order~$4$, which potentially could happen when the centralizer~$C$ of~$H_0$ is isomorphic to~$D_8$.

Indeed, suppose~$\Gamma $ contains a~$4$-cycle. We infer that~$\gtz(G)$ acts transitively on~$\T(a, b, c)/G$. It follows that the equivalence relation~$\equiv$, preserved by~$\gtz(G)$, is trivial, in the sense that it has just one class in this set: all the triples in~$\T(a,b,c)$ are ``coordinate-wise conjugate''. Thus the same can be said of~$\equiv$ on all the translates $h \cdot \T(a, b, c)/G$, easily. As a result, these translates are precisely the equivalence classes of~$\equiv$ on~$X(a, b, c)$ (see Remark~\ref{rmk-Tabc-equiv}).

However, let us now consider the action of the full centralizer~$C\cong D_8$, extended to all of~$X(a, b, c)$ by requiring commutation with the action of~$H$. Given the description of the classes of~$\equiv$, it is clear that~$C$ is compatible with this equivalence relation. We conclude that~$\gtz(G)_{abc}$ contains a copy of~$D_8$, and in particular it is not cyclic of order~$4$.
\end{proof}

The last two lemmas establish that, as announced:

\begin{thm}
When~$q$ is a power of an odd prime, there exist integers~$n_1, n_2$ such that 
\[ \gtz(\psl(2,q)) \cong C_2^{n_1} \times D_8^{n_2} \, .   \]
\end{thm}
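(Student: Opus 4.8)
The plan is to observe that the final theorem is essentially a bookkeeping consequence of the two lemmas immediately preceding it, so the ``proof'' amounts to assembling the pieces already in place. First I would invoke the penultimate lemma: the partition of~$\T/G$ into the subsets~$X(a,b,c)$ is by construction $H$-stable, and by Remark~\ref{rmk-Tabc-equiv} (together with the observation that the relation~$\equiv$ is respected by the $H$-action) each~$X(a,b,c)$ is a union of equivalence classes for~$\equiv$; hence the general splitting principle quoted there gives the internal direct product decomposition
\[ \gtz(G) \cong \prod_{(a,b,c)} \gtz(G)_{abc} \, , \]
where the product runs over a set of representatives for the partition. Since~$\T/G$ is finite, this is a finite product.

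Next I would feed in the last lemma, which identifies each factor~$\gtz(G)_{abc}$ as being isomorphic to one of~$\{1\}$, $C_2$, $C_2^2$, or~$D_8$. Grouping the factors by isomorphism type, and writing~$m_1$ for the number of factors isomorphic to~$C_2$, $m_2$ for the number isomorphic to~$C_2^2$, and~$n_2$ for the number isomorphic to~$D_8$ (the trivial factors contributing nothing), we obtain
\[ \gtz(\psl(2,q)) \cong C_2^{m_1} \times (C_2^2)^{m_2} \times D_8^{n_2} \cong C_2^{m_1 + 2 m_2} \times D_8^{n_2} \, . \]
Setting~$n_1 = m_1 + 2 m_2$ yields the stated form. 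There is essentially no obstacle here beyond making sure the partition is genuinely $H$-stable and $\equiv$-saturated, which is exactly what the construction of~$X(a,b,c)$ as an $H$-orbit of the $\equiv$-saturated set~$\T(a,b,c)/G$ guarantees; the only mildly delicate point is that distinct representatives~$(a,b,c)$ and~$(a',b',c')$ can give the same~$X(a,b,c)$, so one must choose one representative per block of the partition, but this does not affect the argument.

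I expect the ``hard part'' to be entirely upstream of this statement — it lives in Lemma~\ref{lem-centralizer-D8} and the final lemma, where MacBeath's classification is used to pin down the size of~$\T(a,b,c)/G$ (namely~$2$ or~$4$) and to rule out a cyclic group of order~$4$ as a possible value of~$\gtz(G)_{abc}$. Once those are granted, as we may here, the theorem is a formal consequence and needs no further input; in particular one does not obtain, and does not need, any closed-form expression for~$n_1$ and~$n_2$ in terms of~$q$ at this stage.
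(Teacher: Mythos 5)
Your proposal is correct and matches the paper exactly: the theorem is stated there as an immediate consequence of the two preceding lemmas (the direct product decomposition over the blocks $X(a,b,c)$ and the identification of each factor as $\{1\}$, $C_2$, $C_2^2$, or $D_8$), with the hard work living upstream just as you say. The only addition you make is the explicit bookkeeping $n_1 = m_1 + 2m_2$, which the paper leaves implicit.
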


In~\cite{pedro}, explicit examples have been computed (with the help of the GAP software). We found the following table. 
\[ \begin{array}{c|c|c}
q & n_1 & n_2 \\
\hline
5 & 0 & 0 \\
7 & 3 & 2 \\
9 & 12 & 1 \\
11 & 27 & 7 \\
13 & 54 & 17 \\
17 & 104 & 50 \\
19 & 133 & 74
\end{array}  \]
The first line is in accordance with the isomorphism~$\psl(2, 5) \cong \psl(2, 4)$.

\section{Application to dessins}

We will conclude the paper with a proof of Theorem~\ref{thm-main-intro}. Recall that~$\gal$ acts on the isomorphism classes of dessins, and that the action on those dessins with monodromy group~$G$ factors via a certain map 
\[ \lambda_G \colon \gal \longrightarrow \gt(G) \, .   \]
If~$K$ is that field such that~$\Gal(\bar\q / K) = \ker(\lambda_G)$, then~$K/\q$ is Galois and~$\Gal(K/\q)$ is identified with a subgroup of~$\gt(G)$. 

The moduli field of the dessin~$X$ is that field~$F$ such that~$\Gal(\bar\q / F)$ is the subgroup of elements stabilizing~$X$ (up to isomorphism). This subgroup contains~$\ker(\lambda_G)$ if the monodromy group of~$X$ is~$G$, so that~$F \subset K$. This proves the first part of the Theorem.

Now we specialize to~$G= \psl(2, q)$. If~$q$ is even, then~$\gtz(G) = 1$, so that~$\gt(G)$ is abelian (since the commutators belong to~$\gtz(G)$). In this case~$K/\q$ is an abelian extension of~$\q$, as is~$F/\q$ in the notation above. 

When~$q$ is odd, we can at least state that~$\gtz(G)$ is of derived length~$\le 2$. As a result, the derived length of~$\gt(G)$ is~$\le 3$. The same can be said of~$\Gal(K/\q)$ and of~$\Gal(\tilde F/\q)$, where~$\tilde F \subset K$ is the Galois closure of~$F$.

\bibliography{myrefs}
\bibliographystyle{amsalpha}

\end{document}